\definecolor{red}{rgb}{1,0,0}
\definecolor{blue}{rgb}{.2,.2,.8}
\newtheorem{theorem}{Theorem}[section]
\newtheorem{corollary}[theorem]{Corollary}
\theoremstyle{definition}
\begin{document}

\title{Fast algorithm for generating\\ ascending compositions}
\author{Mircea Merca\\
	\footnotesize Constantin Istrati Technical College\\
	\footnotesize Grivitei 91, 105600 Campina, Romania\\
	\footnotesize mircea.merca@profinfo.edu.ro
}
\date{}
\maketitle

\begin{abstract} 
In this paper we give a fast algorithm to generate all partitions of a positive integer $n$. Integer partitions may be encoded as either ascending or descending compositions for the purposes of systematic generation. It is known that the ascending composition generation algorithm is substantially more efficient than its descending composition counterpart. Using tree structures for storing the partitions of integers, we develop a new ascending composition generation algorithm which is substantially more efficient than the algorithms from the literature.
\\ 
\\
{\bf Keywords:}  algorithm, ascending composition, integer partition, generation. 
\\
\\
{\bf MSC 2010:}   05A17, 05C05, 05C85, 11P84
\end{abstract}

\section{Introduction}
\label{intro}
Any positive integer $n$ can be written as a sum of one or more positive integers $\lambda_i$, $n=\lambda_1+\lambda_2+\cdots+\lambda_k$. 
If the order of integers $\lambda_i$ does not matter, this representation is known as an integer partition; otherwise, it is know as a composition. 
When $\lambda_1\le \lambda_2\le\cdots\le\lambda_k$, we have an ascending composition. 
If $\lambda_1\ge \lambda_2\ge\cdots\ge\lambda_k$ then we have a descending composition.
We notice that more often than not there appears the tendency of defining partitions as descending composition and this is also the convention used in this paper. In order to indicate that $\lambda=[\lambda_1,\lambda_2,\ldots,\lambda_k]$ is a partition of $n$, we use the notation  $\lambda\vdash n$ introduced by G. E. Andrews \cite{And76}. 
The number of all partitions of a positive integer $n$ is denoted by $p(n)$ (sequence $A000041$ in OEIS \cite{Slo11}). 

The choice of the way in which the partitions are represented is crucial for the efficiency of their generating algorithm.
Kelleher \cite{Kel06,Kel09} approaches the problem of generating partitions through ascending compositions and proves that the algorithm \scshape AccelAsc \normalfont is more efficient than any other previously known algorithms for generating integer partitions.

In this study we show that the tree structures presented by Lin \cite{Lin05} can be used to efficiently generate ascending compositions in standard representation. 
The idea of using tree structures to store all partitions of an integer is based on the fact that two partitions of the same integers could have more common parts. 
Lin \cite{Lin05} created the tree structures according to the following rule: 
the root of the tree is labeled with $(1,n)$, and $(x',y')$ is a child of the node $(x,y)$ if and only if
$$x'\in\left\{x,x+1,\ldots,\left\lfloor \frac{y}{2}\right\rfloor,y\right\}\qquad \mbox{and}\qquad y'=y-x'\ .$$
If $x'=y$ then $(x',y')=(y,0)$ is a leaf node. It is obvious that any leaf node has the form $(x,0)$, $0<x\leq n$. 

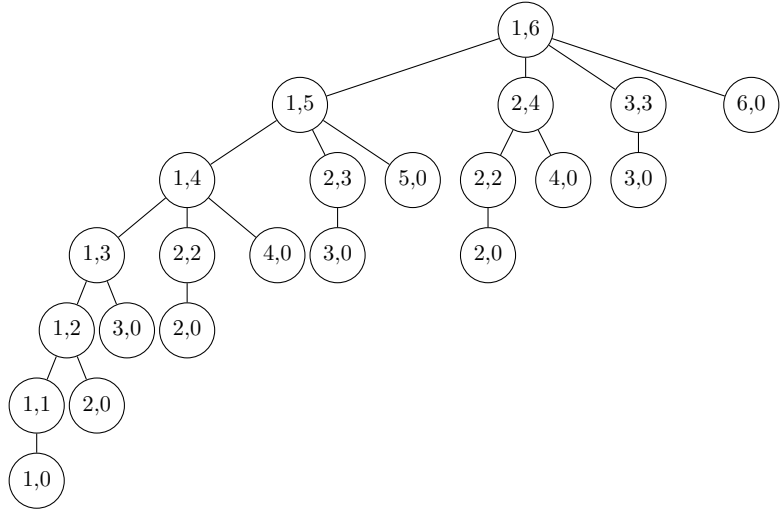
\begin{figure}[H]
	\caption{A partition tree of integer $6$}
	\label{Fig1}
	\centering
	\begin{tikzpicture}
	[level distance=10mm,
	every node/.style={draw,circle,scale=0.85},
	level 1/.style={sibling distance=15mm},
	level 2/.style={sibling distance=10mm},
	level 3/.style={sibling distance=12mm},
	level 4/.style={sibling distance=8mm}]
	\node {1,6}
	child 
	{
		node {1,5}
		child 
		{
			node {1,4}
			child 
			{
				node {1,3}
				child 
				{
					node {1,2}
					child 
					{
						node{1,1}
						child {node{1,0}}
					}
					child {node{2,0}}
				}
				child {	node {3,0} }
			}
			child 
			{
				node {2,2}
				child {node{2,0}}
			}
			child { node {4,0} }
		}
		child[missing]
		child 
		{
			node {2,3}
			child {node {3,0}}
		}
		child { node {5,0} }
	}
	child[missing]
	child 
	{
		node {2,4}
		child 
		{
			node {2,2}
			child {node {2,0}}
		}
		child {node {4,0}}
	}
	child
	{
		node {3,3}
		child {node {3,0}}
	}
	child { node {6,0} }
	;
	\end{tikzpicture}   
\end{figure}

Lin \cite{Lin05} proved that $[x_1,x_2,\ldots,x_k]$ with $x_1\le x_2\le\cdots\le x_k$ is an ascending composition of $n$ if and only if $(x_0,y_0),(x_1,y_1),\ldots,(x_k,y_k)$ is a path from the root $(x_0,y_0)$ to a leaf $(x_k,y_k)$ and then, basing on this, it is shown that the total number of nodes needed to store all the partitions of an integer is twice the number of leaf nodes in its partition tree. Obviously, the number of leaf nodes in partition tree of $n$ is $p(n)$, namely the number of the partitions of $n$.

It is well-known that any ordered tree can be converted into a binary tree by changing the links between the nodes.  
Converting the tree for storing integer partitions, we get a binary tree. Deleting then the root of this binary tree we get a strict binary tree that stores all the partitions of an integer. Applying these conversions to the tree from figure \ref{Fig1}, we obtain the strict binary tree in figure \ref{Fig2}.

In order to generate the ascending compositions we use a depth-first traversal of partition strict binary tree. 
When we reach a leaf node we list from the path that connects the root node with the leaf node only the leaf node and the nodes that are followed by the left child. 
For instance, $(1,5)(1,4)(1,3)(2,2)(2,0)$ is a path that connects the root node $(1,5)$ to the leaf node $(2,0)$. From this path node $(1,3)$ is deleted when listing because it is followed by the node $(2,2)$ which is its right child. Keeping from every remained pair only the first value, we get the ascending composition $[1,1,2,2]$. 

\begin{figure}[H]
	\caption{A partition strict binary tree of integer $6$}
	\label{Fig2}
	\centering
	\begin{tikzpicture}
	[level distance=10mm,
	every node/.style={draw,circle,scale=0.85},
	level 1/.style={sibling distance=50mm},
	level 2/.style={sibling distance=24mm},
	level 3/.style={sibling distance=16mm},
	level 4/.style={sibling distance=8mm},
	level 5/.style={sibling distance=8mm}]
	\node {1,5}
	child 
	{
		node {1,4}
		child 
		{
			node {1,3}
			child 
			{
				node {1,2}
				child 
				{
					node {1,1}
					child 
					{
						node {1,0}		    		
					}
					child 
					{
						node {2,0}
					}
				}
				child 
				{
					node {3,0}
				}
			}
			child 
			{
				node {2,2}
				child 
				{
					node {2,0}
				}
				child 
				{
					node {4,0}
				}
			}
		}
		child 
		{
			node {2,3}
			child 
			{
				node {3,0}
			}
			child 
			{
				node {5,0}
			}
		}
	}
	child
	{
		node {2,4}
		child 
		{
			node {2,2}
			child 
			{
				node {2,0}
			}
			child 
			{
				node {4,0}
			}
		}
		child 
		{
			node {3,3}
			child 
			{
				node {3,0}
			}
			child 
			{
				node {6,0}
			}
		}    
	}
	;
	\end{tikzpicture}   
\end{figure}
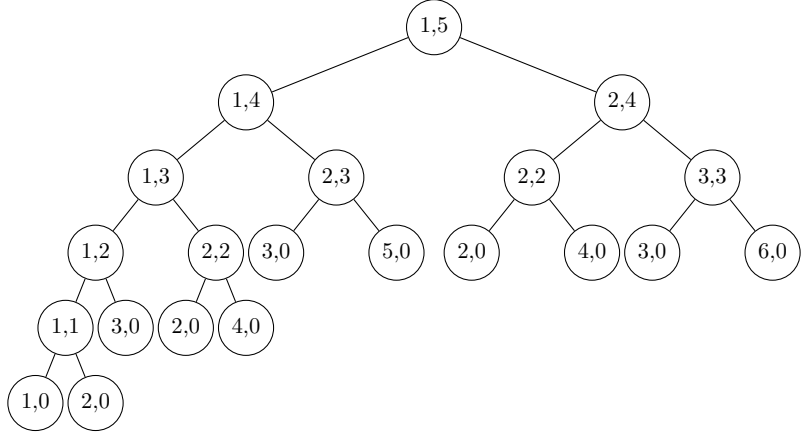

Considering, on the one hand, the way in which the partition tree of $n$ was created, and on the other hand, the rule according to which we can convert any ordered tree into a binary tree, we can deduce the rule according to which we can directly create the partition strict binary tree of $n$. 
The root of partition strict binary tree of $n$ is labeled with $(1,n-1)$, the node $(x_l,y_l)$ is the left child of the node $(x,y)$ if and only if
\begin{equation} \nonumber
x_l = 
\begin{cases} 
x, & \mbox{if\ } 2x\leq y, \\
y, & \mbox{otherwise}
\end{cases}
\mbox{\qquad and \qquad } y_l=y-x_l \ , 
\end{equation}
and the node $(x_r,y_r)$ is the right child of the node $(x,y)$ if and only if
\begin{equation} \nonumber 
x_r = 
\begin{cases} 
x+1, & \mbox{if\ } 2+x\leq y, \\
x+y, & \mbox{otherwise}
\end{cases}
\mbox{\qquad and \qquad } y_r=x+y-x_r \ . 
\end{equation}


\section{A special case of restricted integer partitions}

The integer partitions in which the parts are at least as large as $m$ represent an example of partition with restrictions. 
We denote by $p(n,m)$ the number of partitions $\lambda \vdash n$ in which the parts are at least as large as $m$.

A special case of partitions that have parts at least as large as $m$ is the partition $\lambda \vdash n$ with the property  $\lambda_1 \ge t\cdot\lambda_2$, where $t$ is a positive integer. 
We consider that the partition $[n]$ has this property and we denote the number of these partitions by $p^{(t)}(n,m)$. 
For instance, $p^{(2)}(15,3)=7$, namely $15$ has seven partitions with parts as least as large as $3$ in which the first part is at least twice the second part: $[15]$, $[12,3]$, $[11,4]$, $[10,5]$, $[9,3,3]$, $[8,4,3]$ and $[6,3,3,3]$.

When $\left\lfloor \frac{n}{t+1}\right\rfloor<m\le n$, it is easily seen that $p^{(t)}(n,m)=1$. 

\begin{theorem}\label{th6A}
	Let $n$, $m$ and $t$ be positive integers so that $m\le \left\lfloor \frac{n}{t+1}\right\rfloor$. Then
	\begin{equation}\label{EQ1135A}
	p^{(t)}(n,m)=p^{(t)}(n-m,m)+p^{(t)}(n,m+1)\ .
	\end{equation}
\end{theorem}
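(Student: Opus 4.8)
The plan is to establish the recurrence by a direct combinatorial bijection, splitting the partitions enumerated by $p^{(t)}(n,m)$ according to the size of their smallest part. Every such partition $\lambda\vdash n$ has all parts at least $m$, so its smallest part is either exactly $m$ or at least $m+1$. The partitions whose smallest part is at least $m+1$ are precisely those counted by $p^{(t)}(n,m+1)$, because imposing this stronger lower bound on the parts leaves the top-two condition $\lambda_1\ge t\lambda_2$ untouched; this accounts for the second summand. It then remains to show that the partitions counted by $p^{(t)}(n,m)$ whose smallest part equals $m$ are in bijection with those counted by $p^{(t)}(n-m,m)$.

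First I would define the forward map by deleting a single part equal to $m$ from a partition $\lambda$ whose smallest part is $m$, and its inverse by appending a part $m$ to a partition $\mu$ counted by $p^{(t)}(n-m,m)$; appending is legitimate since every part of $\mu$ is at least $m$, so $m$ becomes the new smallest part. These two operations are visibly inverse to each other, so the work lies in checking that each lands in the intended set. Deleting the smallest part $m$ keeps all remaining parts $\ge m$, and when $\lambda$ has at least three parts it leaves the two largest parts unchanged, so $\lambda_1\ge t\lambda_2$ is preserved; the cases of one or two parts need separate treatment.

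The hard part will be exactly these small cases, and it is here that the hypothesis $m\le\left\lfloor n/(t+1)\right\rfloor$ enters. In the reverse direction, appending $m$ to a single-part partition $\mu=[\,n-m\,]$ produces $[\,n-m,\,m\,]$, and the top-two condition then requires $n-m\ge t\,m$. The hypothesis gives $(t+1)\,m\le n$, hence $t\,m\le n-m$, so the condition holds automatically; the same inequality yields $n-m\ge m$, confirming that $[\,n-m\,]$ is indeed counted by $p^{(t)}(n-m,m)$, and it excludes the degenerate case $n=m$ in the forward direction. With these verifications the bijection shows that the number of partitions counted by $p^{(t)}(n,m)$ with smallest part $m$ equals $p^{(t)}(n-m,m)$, and summing the two disjoint classes yields the identity \eqref{EQ1135A}.
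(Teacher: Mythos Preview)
Your proof is correct and follows the same combinatorial approach as the paper: split the partitions counted by $p^{(t)}(n,m)$ according to whether the smallest part equals $m$ or is at least $m+1$, and identify the first class with $p^{(t)}(n-m,m)$ via deletion/adjunction of a part $m$. The paper's argument is terser and leaves the edge cases implicit, whereas you spell out explicitly how the hypothesis $m\le\lfloor n/(t+1)\rfloor$ guarantees that the bijection behaves on partitions with one or two parts; this is a welcome addition but not a different method.
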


\begin{proof} If $\lambda \vdash n$ is a partition in which the parts are at least as large as $m$ and $\lambda_1 \ge t\cdot\lambda_2$, then the smallest part from $\lambda$ is $m$ or is at least as large as $m+1$. 
	The number of partitions $\lambda \vdash n$ with the smallest part $m$ and $\lambda_1 \ge t\cdot\lambda_2$ is $p^{(t)}(n-m,m)$. 
	Considering that $p^{(t)}(n,m+1)$ is the number of partitions $\lambda \vdash n$ where the smallest part is at least as large as $m+1$ and $\lambda_1 \ge t\cdot\lambda_2$, the theorem is proved.
\end{proof}

For instance, $$p^{(2)}(15,3)=p^{(2)}(12,3)+p^{(2)}(15,4)=4+3=7\ .$$ The number $12$ has four partitions with parts at least as large as $3$ in which the first part is twice the second part: $[12]$, $[9,3]$, $[8,4]$ and $[6,3,3]$. The number $15$ has three partitions with parts at least as large as $4$ in which the first part is twice the second part: $[15]$, $[11,4]$ and $[10,5]$.

\begin{theorem}\label{th6C}
	Let $n$, $m$ and $t$ be positive integers so that $m \le \left\lfloor \frac{n}{t+1}\right\rfloor$. Then
	$$
	p^{(t)}(n,m) =1+\sum_{k=m}^{\left\lfloor \frac{n}{t+1}\right\rfloor} p^{(t)}(n-k,k)\ .
	$$
\end{theorem}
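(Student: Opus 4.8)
The plan is to iterate the recurrence of Theorem~\ref{th6A} and collapse the resulting telescoping expression, using the boundary evaluation $p^{(t)}(n,m)=1$ (which holds whenever $\left\lfloor \frac{n}{t+1}\right\rfloor < m \le n$) as the stopping condition. Write $M=\left\lfloor \frac{n}{t+1}\right\rfloor$ for brevity, so that the hypothesis reads $m\le M$ and the asserted identity becomes $p^{(t)}(n,m)=1+\sum_{k=m}^{M}p^{(t)}(n-k,k)$.

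I would argue by downward induction on $m$, starting from $m=M$ and descending to the given value. For the base case $m=M$, Theorem~\ref{th6A} applies (since $M\le M$) and yields $p^{(t)}(n,M)=p^{(t)}(n-M,M)+p^{(t)}(n,M+1)$. Because $M+1>\left\lfloor \frac{n}{t+1}\right\rfloor$ and $M+1\le n$ (the constraint $n\ge (t+1)m\ge t+1\ge 2$ forces $M\le n/2<n$), the second summand equals $1$, so $p^{(t)}(n,M)=1+p^{(t)}(n-M,M)$, which is exactly the claimed formula with the sum reduced to its single term $k=M$.

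For the inductive step, suppose $m<M$ and that the formula is already established for $m+1$. Since $m+1\le M$, Theorem~\ref{th6A} gives $p^{(t)}(n,m)=p^{(t)}(n-m,m)+p^{(t)}(n,m+1)$. Substituting the induction hypothesis $p^{(t)}(n,m+1)=1+\sum_{k=m+1}^{M}p^{(t)}(n-k,k)$ and absorbing the extra term $p^{(t)}(n-m,m)$ as the $k=m$ summand produces $p^{(t)}(n,m)=1+\sum_{k=m}^{M}p^{(t)}(n-k,k)$, completing the induction and hence the proof.

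The only point requiring care, and thus the main obstacle, is checking that every invocation of Theorem~\ref{th6A} remains inside its hypothesis: as the second argument runs through the values $m,m+1,\dots,M$, each one is at most $M=\left\lfloor \frac{n}{t+1}\right\rfloor$, so the recurrence is legitimate at each stage, and the iteration terminates precisely when the second argument first exceeds $M$, which is exactly where the boundary value $1$ kicks in.
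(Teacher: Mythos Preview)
Your proof is correct and follows essentially the same approach as the paper: repeatedly applying the recurrence of Theorem~\ref{th6A} in the second argument until it exceeds $\left\lfloor \frac{n}{t+1}\right\rfloor$, at which point the boundary value $1$ terminates the telescoping. The paper states this in one sentence (``expand the term $p^{(t)}(n,m+1)$ \dots''), whereas you have written it out as a formal downward induction with the boundary check made explicit.
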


\begin{proof} We expand the term $p^{(t)}(n,m+1)$ from the relation \eqref{EQ1135A} and take into account that $p^{(t)}\left(n,\left\lfloor \frac{n}{t+1}\right\rfloor+1\right)=1$.
\end{proof} 

For example, $$p^{(2)}(15,3)=1+p^{(2)}(12,3)+p^{(2)}(11,4)+p^{(2)}(10,5)=1+4+1+1=7\ .$$

The following theorem presents a recurrence relation that, applied consecutively, allows us to write the numbers $p^{(t)}(n,m)$ in terms of $p(n,m)$.

\begin{theorem}\label{th6D}
	Let $n$, $m$ and $t$ be positive integers, so that $n>t>1$ and $m\le \left\lfloor \frac{n}{t+1}\right\rfloor$. Then
	$$
	p^{(t)}(n,m) = p^{(t-1)}(n,m)-p^{(t-1)}(n-t,m)\ .
	$$
\end{theorem}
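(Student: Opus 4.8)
The plan is to reduce the statement to a cardinality identity and then verify that identity with a generating function. Since $\lambda_1\ge t\lambda_2$ forces $\lambda_1\ge (t-1)\lambda_2$, the partitions counted by $p^{(t)}(n,m)$ are exactly those counted by $p^{(t-1)}(n,m)$ that also satisfy $\lambda_1\ge t\lambda_2$. Hence the theorem is equivalent to showing that the ``gap'' set of partitions $\lambda\vdash n$ with parts $\ge m$, at least two parts, and $(t-1)\lambda_2\le\lambda_1<t\lambda_2$ has cardinality $p^{(t-1)}(n-t,m)$. One is tempted to build a direct bijection by lowering the two largest parts by $t-1$ and by $1$ respectively (total weight removed $t$); I would abandon this, since it breaks precisely when $\lambda_2=\lambda_3$ or $\lambda_2=m$, and patching those degenerate repeated-part cases is delicate. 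Instead I would compute the generating functions.

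Fix $m$ and set $c_a=\prod_{j=m}^{a}(1-q^j)^{-1}$, with $c_{m-1}=1$. Splitting a partition with parts $\ge m$ into its largest part $\lambda_1$ and the remaining partition $\nu$ with largest part $a=\lambda_2$, and using that partitions with parts in $[m,a]$ and largest part exactly $a$ are enumerated by $c_a-c_{a-1}=q^{a}c_a$, I would record that, for every $n\ge m$, $p^{(t)}(n,m)=[q^n]\Phi_t(q)$, where
\[\Phi_t(q)=\frac{1}{1-q}\Big(q^{m}+\sum_{a\ge m}q^{(t+1)a}c_a\Big).\]
Here $q^{m}/(1-q)$ records the one-part partitions and $q^{(t+1)a}c_a/(1-q)$ records those with $\lambda_2=a$ and $\lambda_1\ge ta$. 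Writing $G_t=\sum_{a\ge m}q^{(t+1)a}c_a$ gives $\Phi_t=(q^{m}+G_t)/(1-q)$.

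The heart of the argument is a one-line recurrence for $G_t$. Using $q^{a}c_a=c_a-c_{a-1}$ I would rewrite $G_t=\sum_{a\ge m}q^{ta}(c_a-c_{a-1})$ and split the sum: the first piece is exactly $G_{t-1}=\sum_{a\ge m}q^{ta}c_a$, and reindexing the second by $b=a-1$ (with $c_{m-1}=1$) gives $q^{tm}+q^{t}G_{t-1}$. Hence
\[G_t=(1-q^{t})G_{t-1}-q^{tm},\]
and substituting this into $\Phi_t$ yields
\[\Phi_t(q)=(1-q^{t})\Phi_{t-1}(q)+\frac{q^{t+m}-q^{tm}}{1-q}.\]
Because $(1-q^{t})\Phi_{t-1}$ is the generating function of $p^{(t-1)}(n,m)-p^{(t-1)}(n-t,m)$, the theorem reduces to showing that the correction term does not contribute at $q^{n}$.

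The correction $\dfrac{q^{t+m}-q^{tm}}{1-q}$ is a polynomial all of whose exponents are strictly smaller than $(t+1)m$ (it equals $-q^{t}$ when $m=1$, vanishes when $t=m=2$, and equals $q^{t+m}+\cdots+q^{tm-1}$ when $(t-1)(m-1)>1$). Therefore, as soon as $n\ge (t+1)m$, that is, exactly when $m\le\lfloor n/(t+1)\rfloor$, its coefficient of $q^{n}$ vanishes and $[q^n]\Phi_t=[q^n](1-q^{t})\Phi_{t-1}$, which is the claim; the hypotheses $n>t$ and $t>1$ ensure $n-t\ge m$ and $t-1\ge 1$, so all three quantities are genuine coefficients of the $\Phi$'s. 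I expect the only real obstacle to be the low-degree bookkeeping: one must confirm that $\Phi_t$ faithfully reproduces $p^{(t)}(n,m)$ for all $n\ge m$ (the one-part conventions), and then pin down that the stray term lives precisely below the threshold $(t+1)m$ — which is exactly what forces, and explains, the hypothesis $m\le\lfloor n/(t+1)\rfloor$. As a fallback staying inside the paper's toolkit, I would instead note that $p^{(t-1)}(n,m)-p^{(t-1)}(n-t,m)$ satisfies the same recurrence \eqref{EQ1135A} as $p^{(t)}(n,m)$ (subtract two instances of Theorem~\ref{th6A}) and induct on $n$, with the same boundary bookkeeping resurfacing as the base cases.
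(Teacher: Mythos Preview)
Your generating-function argument is correct and genuinely different from the paper's proof. The paper argues by induction on $n$: it invokes Theorem~\ref{th6C} to write $p^{(t)}(n,m)=1+\sum_{k=m}^{\lfloor n/(t+1)\rfloor}p^{(t)}(n-k,k)$, applies the induction hypothesis termwise, and then adjusts the summation limits using the boundary identities $p^{(t-1)}(n-k,k)=1$ and $p^{(t-1)}(n-t-k,k)=1$ in the appropriate ranges so that Theorem~\ref{th6C} can be applied again to each of the two resulting sums. Your approach instead encodes $p^{(t)}(\cdot,m)$ by the series $\Phi_t$, derives the clean recurrence $G_t=(1-q^t)G_{t-1}-q^{tm}$ via the telescoping identity $q^{a}c_a=c_a-c_{a-1}$, and isolates the discrepancy $\Phi_t-(1-q^t)\Phi_{t-1}=(q^{t+m}-q^{tm})/(1-q)$ as a polynomial supported strictly below degree $(t+1)m$. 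This buys a transparent explanation of the hypothesis $m\le\lfloor n/(t+1)\rfloor$: it is exactly the threshold beyond which the correction term contributes nothing. The paper's route, by contrast, stays entirely within the elementary recurrences \eqref{EQ1135A}--Theorem~\ref{th6C} already set up, at the cost of some index-range bookkeeping in the inductive step. Your closing ``fallback'' (both sides satisfy \eqref{EQ1135A}, induct on $n$) is essentially a streamlined variant of what the paper actually does.
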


\begin{proof} We are to prove the theorem by induction on $n$. For $n=t+1$ we have $m=1$. Considering that $p^{(t)}(t+1,1)=2$, $p^{(t-1)}(1,1)=1$ and 
	$$p^{(t-1)}(t+1,1)=p^{(t-1)}(t,1)+p^{(t-1)}(t+1,2)=2+1=3\ ,$$
	the base case of induction is finished. We suppose that the relation $$p^{(t)}(n',m) = p^{(t-1)}(n',m)-p^{(t-1)}(n'-t,m)$$ is true for any integer $n'$, $t<n'<n$. By Theorem \ref{th6C} we can write
	\begin{eqnarray}
	p^{(t)}(n,m) &=& 1+\sum_{k=m}^{\left\lfloor n/(t+1)\right\rfloor}\left(p^{(t-1)}(n-k,k)-p^{(t-1)}(n-t-k,k)\right) \nonumber\\
	&=& 1+\sum_{k=m}^{\left\lfloor n/(t+1)\right\rfloor}p^{(t-1)}(n-k,k)-\sum_{k=m}^{\left\lfloor n/(t+1)\right\rfloor}p^{(t-1)}(n-t-k,k)\ ,\nonumber
	\end{eqnarray}
	and, on considering that 
	$$p^{(t-1)}(n-k,k)=1\ , \mbox{\quad for\quad} \left\lfloor \frac{n}{t+1}\right\rfloor<k\le\left\lfloor \frac{n}{t}\right\rfloor\ ,$$ 
	and 
	$$p^{(t-1)}(n-t-k,k)=1\ , \mbox{\quad for\quad} \left\lfloor \frac{n}{t+1}\right\rfloor<k\le\left\lfloor \frac{n-t}{t}\right\rfloor\ ,$$
	we get
	$$p^{(t)}(n,m)=\sum_{k=m}^{\left\lfloor n/t\right\rfloor}p^{(t-1)}(n-k,k)-\sum_{k=m}^{\left\lfloor (n-t)/t\right\rfloor}p^{(t-1)}(n-t-k,k)\ .$$
	According to Theorem \ref{th6C}, the proof is finished.
\end{proof}

For instance, $$p^{(3)}(15,3)=p^{(2)}(15,3)-p^{(2)}(12,3)=7-4=3\ .$$ The number $15$ has three partitions with parts at least as large as $3$ in which the first part is thrice the second part: $[15]$, $[12,3]$ and $[9,3,3]$.

The $p^{(t)}(n, m)$ numbers are then also a direct generalization of the \textit{terminal compositions} developed by Kelleher \cite[section 5.4.1]{Kel06}.

We denote by $p^{(t)}(n)$ the number of partitions $\lambda \vdash n$ that have the property $\lambda_1\ge t\cdot\lambda_2$. 
It is clear that $p^{(t)}(n)=p^{(t)}(n,1)$.
We then immediately have the following result.

\begin{corollary}\label{th6E}
	Let $n$ and $t$ be positive integers, so that $n>t>1$. Then 
	$$
	p^{(t)}(n) = p^{(t-1)}(n)-p^{(t-1)}(n-t)\ .
	$$
\end{corollary}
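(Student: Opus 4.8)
The plan is to obtain this corollary as the special case $m=1$ of Theorem~\ref{th6D}, together with the elementary identity $p^{(s)}(N)=p^{(s)}(N,1)$ noted just above the statement. Recall that requiring the smallest part of a partition to be at least as large as $1$ imposes no restriction at all, so $p^{(s)}(N)=p^{(s)}(N,1)$ holds unconditionally for every positive integer $N$; this is the bridge that lets me pass between the unrestricted counts $p^{(t)}(n)$ appearing in the corollary and the two-parameter counts $p^{(t)}(n,m)$ appearing in Theorem~\ref{th6D}.

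First I would check that the hypotheses of Theorem~\ref{th6D} are met when $m=1$. The theorem requires $n>t>1$, which is exactly the hypothesis of the corollary, and it requires $m\le\left\lfloor\frac{n}{t+1}\right\rfloor$. Since $n$ and $t$ are integers with $n>t$, we have $n\ge t+1$, hence $\frac{n}{t+1}\ge 1$ and therefore $\left\lfloor\frac{n}{t+1}\right\rfloor\ge 1=m$. Thus the side condition $m\le\left\lfloor\frac{n}{t+1}\right\rfloor$ holds with $m=1$, and Theorem~\ref{th6D} applies.

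Setting $m=1$ in Theorem~\ref{th6D} then gives
$$p^{(t)}(n,1)=p^{(t-1)}(n,1)-p^{(t-1)}(n-t,1).$$
Finally I would rewrite each term using $p^{(s)}(N)=p^{(s)}(N,1)$: the left side becomes $p^{(t)}(n)$, while on the right $p^{(t-1)}(n,1)=p^{(t-1)}(n)$ and $p^{(t-1)}(n-t,1)=p^{(t-1)}(n-t)$, the latter being well defined since $n-t\ge 1$. This yields the claimed identity. Since the whole argument is a direct specialization, there is no genuine obstacle here; the only point requiring any verification at all is the inequality $1\le\left\lfloor\frac{n}{t+1}\right\rfloor$, which I dispatched above.
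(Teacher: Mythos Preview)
Your argument is correct and is exactly the intended one: the paper states this corollary as following immediately from Theorem~\ref{th6D} together with the identity $p^{(t)}(n)=p^{(t)}(n,1)$, and your write-up merely makes explicit the easy verification that the side condition $1\le\left\lfloor\frac{n}{t+1}\right\rfloor$ holds under the hypothesis $n>t$.
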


The following two corollaries then follows easily from Corollary \ref{th6E}.

\begin{corollary}\label{th6F}
	Let $n$ be a positive integer. Then, the number of partitions of $n$, that have the first part at least twice larger than the second part is $$p^{(2)}(n)=p(n)-p(n-2)\ .$$
\end{corollary}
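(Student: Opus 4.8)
The plan is to specialize Corollary \ref{th6E} to the case $t=2$ and then to recognize the quantity $p^{(1)}(n)$ as the ordinary partition function $p(n)$. Setting $t=2$ in Corollary \ref{th6E}, which is valid whenever $n>t>1$, i.e.\ whenever $n\ge 3$, immediately yields
\[
p^{(2)}(n)=p^{(1)}(n)-p^{(1)}(n-2).
\]
So the whole argument reduces to two small observations: identifying $p^{(1)}$ with $p$, and checking the two excluded cases $n=1$ and $n=2$ by hand.

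First I would argue that $p^{(1)}(n)=p(n)$ for every $n$. By definition $p^{(t)}(n)$ counts the partitions $\lambda\vdash n$ with $\lambda_1\ge t\cdot\lambda_2$. Taking $t=1$ the constraint becomes $\lambda_1\ge\lambda_2$, which is automatically satisfied by every partition, since in this paper partitions are written as descending compositions $\lambda_1\ge\lambda_2\ge\cdots\ge\lambda_k$; the single-part partitions $[n]$ are also counted, consistently with the convention that $[n]$ always has the required property. Hence no partition is excluded, and $p^{(1)}(n)=p(n)$. Substituting this into the displayed identity gives $p^{(2)}(n)=p(n)-p(n-2)$ for all $n\ge 3$.

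It remains to dispose of $n=1$ and $n=2$, where Corollary \ref{th6E} does not apply. These I would verify directly from the definition, using the convention $p(0)=1$ and $p(k)=0$ for $k<0$. For $n=1$ the only partition is $[1]$, which qualifies, so $p^{(2)}(1)=1=p(1)-p(-1)$; for $n=2$ the partitions are $[2]$ (qualifying) and $[1,1]$ (failing $1\ge 2$), so $p^{(2)}(2)=1=p(2)-p(0)$. I expect this bookkeeping of the base cases, together with making explicit the otherwise silent identification $p^{(1)}=p$, to be the only point requiring care; the substantive content has already been carried by Corollary \ref{th6E}.
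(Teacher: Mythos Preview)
Your proof is correct and follows essentially the same approach as the paper, which simply states that the corollary follows easily from Corollary~\ref{th6E}. You are merely more explicit than the paper in spelling out the identification $p^{(1)}=p$ and in verifying the small cases $n=1,2$ that lie outside the hypothesis $n>t$ of Corollary~\ref{th6E}.
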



For instance, 
$p^{(2)}(5)=p(5)-p(3)=7-3=4$, 
that means that number $5$ has four partitions that have the first part at least twice larger than the second part: $[5]$, $[4,1]$, $[3,1,1]$ and $[2,1,1,1]$. The sequence $p^{(2)}(n)$ appears in OEIS \cite{Slo11} as $A027336$. Corollary \ref{th6F} is known and can be found in Kelleher \cite[Corollary 4.1]{Kel09}.

\begin{corollary}\label{th6G}
	Let $n$ be a positive integer. Then, the number of partitions of $n$, that have the first part at least thrice larger than the second part is $$p^{(3)}(n)=p(n)-p(n-2)-p(n-3)+p(n-5)\ .$$
\end{corollary}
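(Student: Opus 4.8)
The plan is to obtain the formula by a single application of Corollary~\ref{th6E} followed by two applications of Corollary~\ref{th6F}, handling the small values of $n$ separately via the usual conventions $p(0)=1$ and $p(k)=0$ for $k<0$. The whole argument is a short unfolding, so I expect no genuine obstacle beyond the boundary bookkeeping.

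First I would set $t=3$ in Corollary~\ref{th6E}, which is legitimate whenever $n>3$, to get
$$p^{(3)}(n)=p^{(2)}(n)-p^{(2)}(n-3)\ .$$
This reduces the problem to rewriting the two terms on the right in terms of the ordinary partition function. Next I would invoke Corollary~\ref{th6F} twice: once with argument $n$, giving $p^{(2)}(n)=p(n)-p(n-2)$, and once with argument $n-3$, giving $p^{(2)}(n-3)=p(n-3)-p(n-5)$. Substituting these into the previous display and collecting terms yields
$$p^{(3)}(n)=\bigl(p(n)-p(n-2)\bigr)-\bigl(p(n-3)-p(n-5)\bigr)=p(n)-p(n-2)-p(n-3)+p(n-5)\ ,$$
which is exactly the claimed identity.

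The only delicate point is the range of validity: Corollary~\ref{th6E} with $t=3$ requires $n>3$, so the chain above settles the statement at once for $n\ge 4$. For the finitely many remaining cases $n\in\{1,2,3\}$ I would simply verify the identity directly, reading $p(k)$ as $0$ for $k<0$; for instance at $n=3$ the right-hand side is $p(3)-p(1)-p(0)+p(-2)=3-1-1+0=1$, which matches the single qualifying partition $[3]$, and the cases $n=1,2$ check out in the same way. This boundary verification, rather than any substantive step, is the main thing to get right.
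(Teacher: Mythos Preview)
Your proposal is correct and follows essentially the same route as the paper's own proof: verify $n\in\{1,2,3\}$ directly, and for $n>3$ apply Corollary~\ref{th6E} with $t=3$ followed by two uses of Corollary~\ref{th6F}. The paper merely states this in a single sentence, whereas you spell out the substitution and the boundary checks explicitly.
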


\begin{proof} For $n\in\left\{1,2,3\right\}$ the formula is immediately proved. For $n>3$ we apply Corollary  \ref{th6E} and Corollary \ref{th6F}.
\end{proof}

For example, 
$p^{(3)}(5)=p^{(2)}(5)-p^{(2)}(2)=4-1=3$, 
which means that number $5$ has three partitions that have the first part at least thrice larger than the second part:  $[5]$, $[4,1]$ and $[3,1,1]$. The sequence $p^{(3)}(n)$ appears in OEIS \cite{Slo11} as $A121659$.

The next theorem allows us to deduce an upper bound for $p^{(3)}(n)$.

\begin{theorem}\label{th6H}
	Let $n$ be a positive integer. Then  
	$$p(n)\ \le\ p(n-1)+p(n-2)-p(n-5)\ .$$
\end{theorem}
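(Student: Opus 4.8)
The plan is to reduce the stated inequality to a clean statement about restricted partitions and then prove the latter by an explicit injection. First I would rewrite the claim using only elementary partition counts. Recall that $p(n,2)=p(n)-p(n-1)$ (deleting one part equal to $1$ sets up a bijection between the partitions of $n$ that contain a $1$ and the partitions of $n-1$), and that deleting one part equal to $2$ likewise gives a bijection between the partitions of $n-3$ containing a $2$ and the partitions of $n-5$; hence $p(n-3)-p(n-5)$ counts the partitions of $n-3$ having \emph{no} part equal to $2$. Using in addition the identity $p(n,3)=p(n,2)-p(n-2,2)=p(n)-p(n-1)-p(n-2)+p(n-3)$ (the partitions into parts $\ge 3$ are those with no $1$ minus those with no $1$ but at least one $2$, the latter being counted by $p(n-2,2)$), a one-line computation shows that the theorem is equivalent to
\[
p(n,3)\ \le\ p(n-3)-p(n-5).
\]
In words: the number of partitions of $n$ into parts $\ge 3$ is at most the number of partitions of $n-3$ that avoid the part $2$.

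Second, I would prove this reformulation by constructing an injection $\Phi$ from the set of partitions of $n$ into parts $\ge 3$ into the set of partitions of $n-3$ with no part equal to $2$. Given $\lambda=[\lambda_1,\ldots,\lambda_k]$ with $\lambda_1\ge\cdots\ge\lambda_k\ge 3$, I would delete the largest part $\lambda_1$ and instead adjoin $\lambda_1-3$ parts equal to $1$. The image $\Phi(\lambda)$ has total weight $n-\lambda_1+(\lambda_1-3)=n-3$, and each of its parts is either $\ge 3$ (a surviving part of $\lambda$) or equal to $1$ (a newly adjoined part), so the part $2$ never occurs and $\Phi(\lambda)$ indeed lies in the target set.

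Third, I would verify injectivity, which is where the construction pays off. In $\Phi(\lambda)$ the parts equal to $1$ are exactly the adjoined ones, since the original $\lambda$ had none; thus their number $\lambda_1-3$ recovers $\lambda_1$, while the parts $\ge 3$ of $\Phi(\lambda)$ are precisely $\lambda\setminus\{\lambda_1\}$. Consequently $\lambda$ is uniquely reconstructible from $\Phi(\lambda)$, so $\Phi$ is injective and $p(n,3)\le p(n-3)-p(n-5)$. Unwinding the first step then gives $p(n)-p(n-1)-p(n-2)+p(n-3)\le p(n-3)-p(n-5)$, i.e. exactly the asserted bound; the small cases are covered by the conventions $p(0)=1$ and $p(m)=0$ for $m<0$.

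I expect the main obstacle to be the design of $\Phi$, not its verification. The inequality is an equality for $1\le n\le 6$, so on that range any proof must furnish a bijection, and this rigidity defeats the naive local moves: subtracting $3$ from the largest (or smallest) part can create a forbidden part $2$ (for instance when reducing a part equal to $5$), and, worse, such moves collide — both $[6]$ and $[3,3]$ would be sent to $[3]$. The idea that removes both difficulties at once is to discharge the deleted weight into parts equal to $1$, which the target partitions are permitted to contain, and to strip the \emph{largest} part so that its size is recorded by the number of $1$'s; once this is arranged, well-definedness and injectivity are routine.
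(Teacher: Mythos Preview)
Your argument is correct. The reduction to $p(n,3)\le p(n-3)-p(n-5)$ is sound (both identities you use are standard consequences of ``remove one copy of the smallest allowed part''), and the map $\Phi$ that deletes $\lambda_1$ and inserts $\lambda_1-3$ copies of $1$ is a well-defined injection: the image has weight $n-3$, contains no $2$ because every surviving part is $\ge 3$, and is invertible because in the image the $1$'s are exactly the inserted parts, so their multiplicity encodes $\lambda_1$. The edge cases $n\le 6$, where the inequality is an equality, are handled automatically since $\Phi$ is then a bijection.

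Your route is genuinely different from the paper's. The paper (following Andrews) works on the generating-function side: it shows that the power series
\[
\frac{1-q-q^2+q^5}{(q;q)_\infty}
\]
has non-positive coefficients beyond the constant term by factoring the numerator as $(1-q)(1-q^2)-q^3(1-q^2)$, expanding via Euler's identity $\sum_{n\ge 0} z^n/(q;q)_n = 1/(z;q)_\infty$, and telescoping. What each approach buys: the $q$-series computation is short and mechanical once one spots the factorisation, and it generalises readily to other polynomial numerators; your injection is fully elementary, avoids any analytic machinery, and makes the combinatorial content visible---in particular it explains \emph{why} equality holds for $n\le 6$ (precisely because every partition of $n-3$ avoiding $2$ has its largest part $\ge 3$ at least as large as $3+(\text{number of }1\text{'s})$ only from $n=7$ on does the image miss something). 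Either proof would be acceptable here; yours has the advantage of being self-contained.
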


\begin{proof} (G. E. Andrews, Personal Communication) To prove the theorem we use Euler's formula \cite{And67}
	$$\sum_{n=0}^{\infty}\frac{z^n}{\left(q;q\right)_n}=\frac{1}{\left(z;q\right)_\infty}\ ,$$
	where
	$$\left(A;q\right)_n=\left(1-A\right)\left(1-Aq\right)\cdots(1-Aq^{n-1})\ .$$
	
	We need to show that, except for the constant term, all the coefficients in
	$$\frac{1-q-q^2+q^5}{\left(q;q\right)_\infty}$$
	are non-positive. 
	
	We have
	\begin{eqnarray*}
		\frac{1-q-q^2+q^5}{\left(q;q\right)_\infty} &=& \frac{(1-q)(1-q^2)-q^3(1-q^2)}{\left(q;q\right)_\infty} \\
		&=& \frac{1}{\left(q^3;q\right)_\infty}-\frac{q^3}{(1-q)\left(q^3;q\right)_\infty}\\
		&=& \sum_{n=0}^{\infty}\frac{q^{3n}}{\left(q;q\right)_n}-\sum_{n=0}^{\infty}\frac{q^{3n+3}}{(1-q)\left(q;q\right)_n}\\
		&=&1+\sum_{n=1}^{\infty}\frac{q^{3n}}{\left(q;q\right)_n}-\sum_{n=1}^{\infty}\frac{q^{3n}\left(1-q^n\right)}{(1-q)\left(q;q\right)_n}\\
		&=&1+\sum_{n=1}^{\infty}\frac{q^{3n}}{\left(q;q\right)_n}\left(1-\frac{1-q^n}{1-q}\right)\\
		&=&1+\sum_{n=2}^{\infty}\frac{q^{3n}}{\left(q;q\right)_n}\left(-q-q^2-\cdots-q^{n-1}\right)
	\end{eqnarray*}
	and clearly the coefficient of $q^0$ is $1$, the coefficients of $q^k$ for $1\le k\le 6$ are all $0$, and for $k>6$ all the coefficients are negative.   
\end{proof}

We can then present the following corollary of Theorem \ref{th6H}.

\begin{corollary}\label{th6I}
	Let $n$ be a positive integer. Then
	$$p^{(3)}(n)\le p^{(2)}(n-1)\ .$$
\end{corollary}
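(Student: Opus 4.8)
The plan is to reduce the claimed inequality directly to Theorem \ref{th6H}, which has already been established, so that no new analytic work is required. First I would rewrite both sides of the desired inequality $p^{(3)}(n)\le p^{(2)}(n-1)$ in terms of the unrestricted partition function $p$. For the left-hand side I would invoke Corollary \ref{th6G} to obtain $p^{(3)}(n)=p(n)-p(n-2)-p(n-3)+p(n-5)$. For the right-hand side I would apply Corollary \ref{th6F} with $n$ replaced by $n-1$, which gives $p^{(2)}(n-1)=p(n-1)-p(n-3)$.

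Next I would substitute these two expressions into the inequality and simplify. The term $-p(n-3)$ occurs on both sides and cancels, so the claim becomes $p(n)-p(n-2)+p(n-5)\le p(n-1)$. Rearranging the terms, this is precisely the statement $p(n)\le p(n-1)+p(n-2)-p(n-5)$ of Theorem \ref{th6H}. Hence the corollary follows at once from Theorem \ref{th6H}, the entire content of which is carried by the generating-function identity attributed to Andrews in its proof.

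Because the reduction is a routine algebraic manipulation, I do not anticipate any genuine obstacle: all the combinatorial and analytic difficulty is already absorbed into Corollaries \ref{th6F} and \ref{th6G} and into Theorem \ref{th6H}. The only point that warrants a little care is the behaviour for small values of $n$, where some arguments of $p$ become zero or negative. Adopting the usual conventions $p(0)=1$ and $p(k)=0$ for $k<0$, the formulas of Corollaries \ref{th6F} and \ref{th6G} and the inequality of Theorem \ref{th6H} remain valid for every positive integer $n$, so the argument covers all cases uniformly and no separate base cases are needed.
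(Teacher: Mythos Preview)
Your argument is correct and is essentially the same as the paper's: the paper cites Corollary \ref{th6E} (which, combined with Corollary \ref{th6F}, is exactly your invocation of Corollary \ref{th6G}) and Theorem \ref{th6H} (the paper's reference to ``Theorem \ref{th6I}'' is a typo for \ref{th6H}), reducing the inequality to $p(n)\le p(n-1)+p(n-2)-p(n-5)$ just as you do. Your explicit remark about the conventions $p(0)=1$, $p(k)=0$ for $k<0$ handling small $n$ is a welcome addition.
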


\begin{proof} To prove this inequality we apply Corollary \ref{th6E} and Theorem \ref{th6I}.
\end{proof}


\section{Traversal of partition tree}

To generate all the integer partitions of $n$ we can traverse in depth-first order the partition tree. To do this we need efficient traversal algorithms. Generally, for inorder traversing a binary tree we can efficiently use a stack. Strict binary trees are particular cases of binary trees and for their inorder traversal we can use Algorithm \ref{alg4}, built as algorithm 
\textit{INORD12} presented by Livovschi and Georgescu \cite[pp. 66]{Liv86}.

Analyzing Algorithm \ref{alg4}, we note that in the stack $S$ there are pushed only those nodes that are not leaf nodes in the strict binary tree for storing integer partition of $n$. 
As this strict binary tree has $2p(n)-1$ nodes, we deduce that algorithm \ref{alg4} executes $p(n)-1$ operations of pushing in the stack $S$ (line \ref{lin41}) and the same number of operations of popping from the stack $S$ (line \ref{lin42}). 
The leaf nodes are visited in the line \ref{lin43}, and the inner nodes are visited in the line \ref{lin44}. Thus, we deduce that the total number of iterations of the internal \textbf{while} loop from the lines \ref{alg45}-\ref{alg46} is $p(n)$.

\begin{algorithm}[H]
	\caption{Inorder traversal of strict binary tree}
	\label{alg4}
	\begin{algorithmic}[1]
		\Require $root$
		\State initialize an empty stack $S$
		\State $v\leftarrow root$
		\State $c \leftarrow \textbf{true}$
		\While{$c$} \label{alg45}
		\While{node $v$ has left child}
		\State push node $v$ onto stack $S$ \label{lin41}
		\State $v\leftarrow$ left child of node $v$
		\EndWhile
		\State \textbf{visit} $v$ \label{lin43}
		\If{stack $S$ is not empty}
		\State pop node $v$ from stack $S$ \label{lin42}
		\State \textbf{visit} $v$ \label{lin44}
		\State $v\leftarrow$ right child of node $v$
		\Else
		\State $c \leftarrow \textbf{false}$
		\EndIf
		\EndWhile \label{alg46}
	\end{algorithmic}
\end{algorithm}

Strict binary trees for storing integer partitions are special cases of strict binary trees, while Algorithm \ref{alg4} is a general algorithm for inorder traversal of the strict binary trees. Adapting Algorithm \ref{alg4} to the special case of partition strict binary trees leads to more efficient algorithms for inorder traversal of these trees. Next we are to show how these algorithms can be obtained.

Let $(x,y)$ be a node from partition strict binary tree, so that $2x\le y$. 
If $(x_l,y_l)$ is the left child of the node $(x,y)$ and has the property $2x_l>y_l$ then, $(x_l,y_l)$ is the root of a strict binary subtree with exactly three nodes: $(x,y-x)$, $(y-x,0)$ and $(y,0)$. 
If $(x_r,y_r)$ is the right child of the node $(x,y)$ and has the property $2x_r>y_r$ then $(x_r,y_r)$ is the root of a strict binary subtree where all the left descendents are leaf nodes. 
This note allows us to inorder traverse the partition strict binary tree by the help of a stack in which we push only those nodes $(x,y)$ that have the property $2x\le y$. 
In this way, we get the Algorithm \ref{alg5} for inorder traversal of the partition strict binary tree of $n$.

The number of the inner nodes that have the form $(x,y)$ with the property $2x\le y$ is equal to the number of the partition of $n$ that have at least two parts where the first part is at least twice larger than the second part. 
Algorithm \ref{alg5} executes $p^{(2)}(n)-1$ operations of pushing in the stack $S$ (line \ref{alg51}) and as many operations of popping from the stack $S$ (line \ref{alg52}). 
Any inner nodes that has the form $(x,y)$ with the property $2x>y$ has only a left child which is leaf node. 
A leaf node that has a left child is always visited in the line \ref{alg54} and, by the help of Corollary \ref{th6F}, we deduce that the number of these nodes is $p(n-2)$. 
A leaf node that is a right child is visited in the line \ref{alg59}. 
It results that the total number of iterations of the internal \textbf{while} loop from the lines \ref{alg57}-\ref{alg58} is $p^{(2)}(n)$.

\begin{algorithm}[H]
	\caption{Inorder traversal of partition strict binary tree  - version 1}
	\label{alg5}
	\begin{algorithmic}[1]
		\Require $n$
		\State initialize an empty stack $S$
		\State $(x,y)\leftarrow (1,n-1)$
		\State $c \leftarrow \textbf{true}$
		\While{$c$} \label{alg57}
		\While{$2x\le y$}
		\State push node $(x,y)$ onto stack $S$ \label{alg51}
		\State $(x,y)\leftarrow$ left child of node $(x,y)$ \label{alg53}
		\EndWhile
		\While{$x\le y$}
		\State \textbf{visit} $(y,0),(x,y)$ \label{alg54}
		\State $(x,y)\leftarrow$ right child of node $(x,y)$ \label{alg55}
		\EndWhile
		\State \textbf{visit} $(x+y,0)$ \label{alg59}
		\If{stack $S$ is not empty}
		\State pop node $(x,y)$ from stack $S$  \label{alg52}
		\State \textbf{visit} $(x,y)$ 
		\State $(x,y)\leftarrow$ right child of node $(x,y)$ \label{alg56}
		\Else
		\State $c \leftarrow \textbf{false}$
		\EndIf
		\EndWhile \label{alg58}
	\end{algorithmic}
\end{algorithm}

Thus, the reducing of the number of operations executed upon the stack allows to get a more efficient algorithm for traversing the partition strict binary tree. But the number of operations executed upon the stack could be reduced even more. 

Let $(x,y)$ be a node from the partition strict binary tree, so that $3x\le y$. 
If $(x_l,y_l)$ is the left child of the node $(x,y)$ and has the property $3x_l>y_l$, then $2x_l\le y_l$ and $(x_l,y_l)$ is the root of a strict binary tree in which the left subtree is a strict binary tree in which all the left children are leaf nodes. 
This note allows us to modify Algorithm \ref{alg5} in order to get a new algorithm for inorder traversing the partition strict binary trees. 
Thus, in Algorithm \ref{alg6} we push in the stack $S$ only those inner nodes $(x,y)$ that have the property $3x\le y$.

The number of the nodes $(x,y)$ with the property $3x\le y$ from the partition strict binary tree of positive integer $n$ is equal to the number of partitions of $n$ that have at least two parts where the first one is at least three times larger than the second one. 
In this way, Algorithm \ref{alg6} executes $p^{(3)}(n)-1$ operations of pushing in the stack $S$ (line \ref{alg61}) and as many operations of popping from the stack $S$ (line \ref{alg62}). 
In other words, the lines \ref{alg61}-\ref{alg63} are executed $p^{(3)}(n)-1$ times, in the same way as the lines \ref{alg62}-\ref{alg64}. 
The total number of iterations of the internal \textbf{while} loop from the lines \ref{alg65}-\ref{alg66} is $p^{(2)}(n)-p^{(3)}(n)$, namely $p^{(2)}(n-3)$. 
The leaf nodes that are right children are visited in the lines  \ref{alg67} and \ref{alg68}. The number of these nodes is $p^{(2)}(n)$. Considering that the line \ref{alg67} is executed $p^{(2)}(n-3)$ times, we deduce that the total number of iterations of the internal \textbf{while} loop from the lines \ref{alg69}-\ref{alg610} is $p^{(3)}(n)$.

\begin{algorithm}[H]
	\caption{Inorder traversal of partition strict binary tree  - version 2}
	\label{alg6}
	\begin{algorithmic}[1]
		\Require $n$
		\State initialize an empty stack $S$
		\State $(x,y)\leftarrow (1,n-1)$
		\State $c \leftarrow \textbf{true}$
		\While{$c$} \label{alg69}
		\While{$3x\le y$}
		\State push node $(x,y)$ onto stack $S$ \label{alg61}
		\State $(x,y)\leftarrow$ left child of node $(x,y)$ \label{alg63}
		\EndWhile
		\While{$2x\le y$} \label{alg65}
		\State \textbf{visit} $(y-x,0),(x,y-x)$ 
		\State $(p,q)\leftarrow$ right child of node $(x,y-x)$
		\While{$p\le q$}
		\State \textbf{visit} $(q,0),(p,q)$
		\State $(p,q)\leftarrow$ right child of node $(p,q)$
		\EndWhile
		\State \textbf{visit} $(p+q,0),(x,y)$ \label{alg67}
		\State $(x,y)\leftarrow$ right child of node $(x,y)$ 
		\EndWhile \label{alg66}
		\While{$x\le y$}
		\State \textbf{visit} $(y,0),(x,y)$ 
		\State $(x,y)\leftarrow$ right child of node $(x,y)$
		\EndWhile
		\State \textbf{visit} $(x+y,0)$ \label{alg68}
		\If{stack $S$ is not empty}
		\State pop node $(x,y)$ from stack $S$ \label{alg62}
		\State \textbf{visit} $(x,y)$ 
		\State $(x,y)\leftarrow$ right child of node $(x,y)$ \label{alg64}
		\Else
		\State $c \leftarrow \textbf{false}$
		\EndIf
		\EndWhile \label{alg610}
	\end{algorithmic}
\end{algorithm}


\section{Algorithms for generating ascending compositions}

We showed above how we can use a stack for an efficient traversal of partition strict binary trees.  
In Algorithm \ref{alg4}, when visiting a node, the stack contains a part from the nodes that are on the path which connects that node to the root.  
Which are the nodes stored in the stack? 
A node is in a stack when another node is visited if and only if the visited node is the left child of the node from the stack. Taking into account the fact that partition strict binary trees have been obtained by converting partition trees, it is clear that, when visiting a leaf node, the content of the stack together with the leaf node represents a partition.

\begin{algorithm}[H]
	\caption{Generating ascending compositions - version 1}
	\label{alg7}
	\begin{algorithmic}[1]
		\Require $n$
		\State $k\leftarrow 0$
		\State $x\leftarrow 1$
		\State $y\leftarrow n-1$
		\State $c\leftarrow \textbf{true}$
		\While{$c$}
		\While{$2x\le y$}
		\State $k\leftarrow k+1$ \label{alg71}
		\State $a_k\leftarrow x$ 
		\State $y\leftarrow y-x$ \label{alg72}
		\EndWhile
		\While{$x\le y$}
		\State $k\leftarrow k+1$ 
		\State $a_k\leftarrow x$ 
		\State $k\leftarrow k+1$ 
		\State $a_k\leftarrow y$ 
		\State \textbf{visit} $a_1,a_2,\ldots,a_{k}$
		\State $k\leftarrow k-2$ 
		\State $x\leftarrow x+1$ \label{alg75}
		\State $y\leftarrow y-1$ \label{alg76}
		\EndWhile
		\State $k\leftarrow k+1$
		\State $a_{k}\leftarrow x+y$ 
		\State \textbf{visit} $a_1,a_2,\ldots,a_{k}$
		\State $k\leftarrow k-1$ 
		\If{$k>0$}
		\State $y\leftarrow x+y$ \label{alg73}
		\State $x\leftarrow a_k$ 
		\State $k\leftarrow k-1$ \label{alg74}
		\State $x\leftarrow x+1$ \label{alg75a}
		\State $y\leftarrow y-1$ \label{alg76a}
		\Else
		\State $c\leftarrow \textbf{false}$
		\EndIf
		\EndWhile 
	\end{algorithmic}
\end{algorithm}

To get an efficient algorithm for generating ascending compositions, we can replace the stack from Algorithm \ref{alg5} with an array that has nonnegative integer components $\left(a_1,a_2,\ldots,a_k\right)$, where $a_1$ represents the bottom of the stack and $a_k$ represents the top of the stack. 
If we give up visiting the inner nodes from Algorithm \ref{alg5} and the visit of a leaf node is preceded by the visit of the array $\left(a_1,a_2,\ldots,a_k\right)$ we get Algorithm \ref{alg7} for generating ascending compositions in lexicographic order.

Algorithm \ref{alg7} is presented in a form that allows fast identification both of the correlation between the operations executed in the stack $S$ and the operations executed with the array $\left(a_1,a_2,\ldots,a_k\right)$, and the movement operations in the tree. 
Thus, the lines \ref{alg71}-\ref{alg72} are responsible with the pushing of the nodes in the stack and the movement in the left subtree. 
The extraction of the nodes of the stack is realized in the lines \ref{alg73}-\ref{alg74}, and the movement in the right subtree is realized in the lines \ref{alg75}-\ref{alg76} and \ref{alg75a}-\ref{alg76a}. 
A slightly optimized version of Algorithm \ref{alg7} is presented in Algorithm \ref{alg9}.

\begin{algorithm}[H]
	\caption{Generating ascending compositions - version 2}
	\label{alg9}
	\begin{algorithmic}[1]
		\Require $n$
		\State $k\leftarrow 1$
		\State $x\leftarrow 1$
		\State $y\leftarrow n-1$
		\While{$k>0$} \label{alg91}
		\While{$2x\le y$} \label{alg93}
		\State $a_k\leftarrow x$ 
		\State $y\leftarrow y-x$ 
		\State $k\leftarrow k+1$ 
		\EndWhile \label{alg94}
		\State $t\leftarrow k+1$ 
		\While{$x\le y$} \label{alg95}
		\State $a_{k}\leftarrow x$ 
		\State $a_{t}\leftarrow y$ 
		\State \textbf{visit} $a_1,a_2,\ldots,a_{t}$
		\State $x\leftarrow x+1$ 
		\State $y\leftarrow y-1$ 
		\EndWhile \label{alg96}
		\State $y\leftarrow x+y-1$
		\State $a_{k}\leftarrow y+1$
		\State \textbf{visit} $a_1,a_2,\ldots,a_{k}$ \label{alg99}
		\State $k\leftarrow k-1$
		\State $x\leftarrow a_k+1$
		\EndWhile \label{alg92}
	\end{algorithmic}
\end{algorithm}

We note that Algorithm \ref{alg9} was rewritten  without using the \textbf{if} statement and the boolean variable $c$. 
It is obvious that the statements from the first branch of the \textbf{if} statement could not be eliminated, but they were rearranged around the \textbf{visit} statement (line \ref{alg99}). 
This was possible because the initializing statement of the variable $k$ was changed. 
As Algorithm \ref{alg9} executes less assignment statements and does not contain the \textbf{if} statement, the time required to execute Algorithm \ref{alg9} is less than the time required to execute Algorithm \ref{alg7}.

Comparing the algorithm \scshape AccelAsc \normalfont described and analyzed by Kelleher \cite{Kel06,Kel09} with Algorithm \ref{alg9}, we note that Algorithm \ref{alg9} is a slightly modified version of the algorithm \scshape AccelAsc\normalfont. 
In fact we have two presentations of the same algorithm. The differences are determined by the different methods of getting them.

The notes made upon the number of operations executed in Algorithm \ref{alg5} allow us to determine how many times the assignment statement are executed and how many times the boolean expressions are evaluated in Algorithm \ref{alg9}.

\begin{theorem}\label{th36}
	Algorithm \ref{alg9} executes $4p(n)+4p^{(2)}(n)$ assignment statements and evaluates $p(n)+3p^{(2)}(n)$ boolean expressions. 
\end{theorem}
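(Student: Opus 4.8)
The plan is to reduce the statement to counting how many times the body of each loop in Algorithm \ref{alg9} is executed, and then to read off the number of assignments and boolean tests from the fixed number of such statements appearing in each loop body. Since Algorithm \ref{alg9} is merely a rearranged, \textbf{if}-free version of Algorithm \ref{alg5} performing the same inorder traversal of the partition strict binary tree, the iteration counts established earlier for Algorithm \ref{alg5} should transfer unchanged. Concretely, I would introduce three quantities: the number $O$ of passes through the outer \textbf{while} loop (lines \ref{alg91}--\ref{alg92}), the total number $L$ of executions of the left-descent loop body (lines \ref{alg93}--\ref{alg94}), and the total number $R$ of executions of the right-sweep loop body (lines \ref{alg95}--\ref{alg96}).

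The first step is to pin down $O$, $L$, and $R$ from the analysis already carried out for Algorithm \ref{alg5}. The left-descent loop performs exactly one stack push per iteration, so $L$ equals the number of pushes, namely $L = p^{(2)}(n)-1$. The outer loop is traversed once per inner node $(x,y)$ with $2x\le y$ together with the terminating pass, giving $O = p^{(2)}(n)$. Finally, since each of the $R + O = p(n)$ visits produces one ascending composition and the line \ref{alg99} visit fires exactly once per outer pass, we get $R = p(n) - O = p(n) - p^{(2)}(n)$, which by Corollary \ref{th6F} equals $p(n-2)$. The only care needed here is to confirm that the cosmetic differences between the two algorithms (array versus stack, the initial value $k\leftarrow 1$, and the elimination of the \textbf{if}) do not alter the traversal, so that these three counts are inherited faithfully.

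With $O$, $L$, and $R$ in hand, the assignment count is a direct tally. The body of the left-descent loop contains three assignments and the right-sweep loop four; the remaining per-pass assignments are $t\leftarrow k+1$ once before the sweep, the two assignments after it, and the two assignments after the visit, contributing $5$ per outer pass. Adding the three initialisations gives $3 + 3L + 4R + 5O$. Substituting the values above collapses the constant and the $p^{(2)}(n)$ coefficients to yield exactly $4p(n)+4p^{(2)}(n)$.

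The boolean count is where the only genuine subtlety lies: each \textbf{while} condition is evaluated once more than its body executes, the extra evaluation being the failing test that exits the loop. Thus the left-descent test contributes $L + O$ (one failing test per outer pass), the right-sweep test contributes $R + O$, and the outer test $k>0$ contributes $O + 1$ (the final $+1$ being the terminating evaluation). Summing gives $L + R + 3O + 1$, and substituting the three counts makes the $\pm 1$ and the $p^{(2)}(n)$ terms cancel, leaving $p(n)+3p^{(2)}(n)$. The main obstacle is therefore not the arithmetic but the bookkeeping of these off-by-one exit tests together with the verification that the iteration counts genuinely carry over from Algorithm \ref{alg5}; once those are secured, both totals follow by substitution.
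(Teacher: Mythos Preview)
Your proposal is correct and follows essentially the same approach as the paper's proof: you count the iterations of each of the three \textbf{while} loops using the counts inherited from the analysis of Algorithm~\ref{alg5} (namely $p^{(2)}(n)$, $p^{(2)}(n)-1$, and $p(n-2)$), tally the fixed number of assignments and boolean tests per loop body plus the extra failing test per loop entry, and simplify via Corollary~\ref{th6F}. The only differences are cosmetic: you name the counts $O$, $L$, $R$ and derive $R$ from the identity $R+O=p(n)$ rather than quoting $p(n-2)$ directly, whereas the paper states each count and multiplies out.
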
  

\begin{proof} 
	The total number of iterations of the internal \textbf{while} loop from the lines \ref{alg91}-\ref{alg92} is $p^{(2)}(n)$, and the \textbf{while} loop contains $5$ assignment statements. 
	The total number of iterations of the internal \textbf{while} loop from the lines \ref{alg93}-\ref{alg94} is $p^{(2)}(n)-1$, and the \textbf{while} loop contains $3$ assignment statements. 
	The total number of iterations of the internal \textbf{while} loop from the lines \ref{alg95}-\ref{alg96} is $p(n-2)$, and the \textbf{while} loop contains $4$ assignment statements. 
	Taking into account the first 3 assignment statements from the algorithm, we get the relation with which we determine the number of executions of assignment statements: $8p^{(2)}(n)+4p(n-2)$.
	The boolean expression from the line \ref{alg91} is evaluated $p^{(2)}(n)+1$ times, the boolean expression from the line \ref{alg93} is evaluated $p^{(2)}(n)+p^{(2)}(n)-1$ times, and the boolean expression from the line \ref{alg95} is evaluated $p^{(2)}(n)+p(n-2)$ times.
	Thus, it results that the boolean expressions from the algorithm are evaluated $4p^{(2)}(n)+p(n-2)$ times. 
	According to Corollary \ref{th6F}, the proof is finished.
\end{proof}

If in Algorithm \ref{alg6} we execute the conversions made in Algorithm \ref{alg5}, we get Algorithm \ref{alg10} for generating ascending composition in lexicographic order.

\begin{algorithm}[H]
	\caption{Generating ascending compositions - version 3}
	\label{alg10}
	\begin{algorithmic}[1]
		\Require $n$
		\State $k\leftarrow 1$
		\State $x\leftarrow 1$
		\State $y\leftarrow n-1$
		\While{$k>0$} \label{alg101}
		\While{$3x\le y$} \label{alg103}
		\State $a_k\leftarrow x$ 
		\State $y\leftarrow y-x$ 
		\State $k\leftarrow k+1$ 
		\EndWhile \label{alg104}
		\State $t\leftarrow k+1$ 
		\State $u\leftarrow k+2$ 		
		\While{$2x\le y$} \label{alg105}
		\State $a_k\leftarrow x$ 
		\State $a_t\leftarrow x$ 
		\State $a_u\leftarrow y-x$ 
\algstore{myalg}
\end{algorithmic}
\end{algorithm}

\begin{algorithm}[H]                   
\begin{algorithmic} 
\algrestore{myalg}			
		\State \textbf{visit} $a_1,a_2,\ldots,a_{u}$
		\State $p\leftarrow x+1$ 
		\State $q\leftarrow y-p$ 
		\While{$p\le q$} \label{alg107}
		\State $a_t\leftarrow p$ 
		\State $a_u\leftarrow q$ 
		\State \textbf{visit} $a_1,a_2,\ldots,a_{u}$
		\State $p\leftarrow p+1$ 
		\State $q\leftarrow q-1$ 
		\EndWhile \label{alg108}
		\State $a_t\leftarrow y$ 
		\State \textbf{visit} $a_1,a_2,\ldots,a_{t}$
		\State $x\leftarrow x+1$ 
		\State $y\leftarrow y-1$ 
		\EndWhile \label{alg106}
		\While{$x\le y$} \label{alg109}
		\State $a_{k}\leftarrow x$ 
		\State $a_{t}\leftarrow y$ 
		\State \textbf{visit} $a_1,a_2,\ldots,a_{t}$
		\State $x\leftarrow x+1$ 
		\State $y\leftarrow y-1$ 
		\EndWhile \label{alg1010}
		\State $y\leftarrow x+y-1$
		\State $a_{k}\leftarrow y+1$
		\State \textbf{visit} $a_1,a_2,\ldots,a_{k}$
		\State $k\leftarrow k-1$
		\State $x\leftarrow a_k+1$
		\EndWhile \label{alg102}
	\end{algorithmic}
\end{algorithm}

The notes made upon the number of operations executed in Algorithm \ref{alg6} allow us to determine how many times the assignment statements are executed and how many times the boolean expressions are evaluated in Algorithm \ref{alg10}.

\begin{theorem}\label{th37}
	Algorithm \ref{alg10} executes $4p(n)+5p^{(3)}(n)$ assignment statements and evaluates $p(n)+4p^{(3)}(n)$ boolean expressions.
\end{theorem}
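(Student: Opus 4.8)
The plan is to mirror the proof of Theorem \ref{th36}, decomposing Algorithm \ref{alg10} into its five \textbf{while} loops and weighting the number of statements in each loop body by the number of times that body is executed. The analysis already carried out for Algorithm \ref{alg6} supplies three of the five iteration counts, which transfer verbatim through the conversion into Algorithm \ref{alg10}: the outer loop (lines \ref{alg101}--\ref{alg102}) runs $p^{(3)}(n)$ times, the loop with guard $3x\le y$ (lines \ref{alg103}--\ref{alg104}) runs $p^{(3)}(n)-1$ times in total, and the loop with guard $2x\le y$ (lines \ref{alg105}--\ref{alg106}) runs $p^{(2)}(n)-p^{(3)}(n)$ times in total. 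First I would simply record these three counts.

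The step that is \emph{not} handed to us directly is the combined number of iterations of the remaining two loops: the innermost loop with guard $p\le q$ (lines \ref{alg107}--\ref{alg108}) and the loop with guard $x\le y$ (lines \ref{alg109}--\ref{alg1010}); call these totals $N_3$ and $N_4$. The cleanest way to pin down $N_3+N_4$ is a counting argument on the \textbf{visit} statements: Algorithm \ref{alg10} emits each of the $p(n)$ partitions exactly once, so the total number of executed \textbf{visit} statements is $p(n)$. Tallying them loop by loop — two per iteration of the $2x\le y$ loop, one per iteration of the $p\le q$ loop, one per iteration of the $x\le y$ loop, and one per iteration of the outer loop — gives
\[
2\bigl(p^{(2)}(n)-p^{(3)}(n)\bigr)+N_3+N_4+p^{(3)}(n)=p(n),
\]
whence $N_3+N_4=p(n)-2p^{(2)}(n)+p^{(3)}(n)$. (Equivalently, the right-child leaves are the ones counted by the $2x\le y$ loop and the outer loop and number $p^{(2)}(n)$, so the left-child leaves, counted by the other three loops, number $p(n)-p^{(2)}(n)$.) I expect this visit-counting identity to be the main obstacle, since it is the only place where one must reason globally about the tree rather than read a count off the preceding analysis.

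With all five counts in hand, the assignment tally is routine bookkeeping. The initialization contributes $3$; the outer-loop body outside the inner loops contains $6$ assignments ($t,u,y,a_k,k,x$); the $3x\le y$ loop contains $3$; the $2x\le y$ loop body outside its innermost loop contains $8$; the $p\le q$ loop contains $4$; and the $x\le y$ loop contains $4$. Summing each figure against its iteration count gives
\[
3+6p^{(3)}(n)+3\bigl(p^{(3)}(n)-1\bigr)+8\bigl(p^{(2)}(n)-p^{(3)}(n)\bigr)+4(N_3+N_4),
\]
and substituting the value of $N_3+N_4$ collapses this to $4p(n)+5p^{(3)}(n)$.

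For the boolean expressions I would use that a \textbf{while} whose body runs $m$ times and which is entered $e$ times has its guard evaluated $m+e$ times. This yields $p^{(3)}(n)+1$ evaluations of the outer guard, $2p^{(3)}(n)-1$ of the $3x\le y$ guard, $p^{(2)}(n)$ of the $2x\le y$ guard, $N_3+\bigl(p^{(2)}(n)-p^{(3)}(n)\bigr)$ of the $p\le q$ guard, and $N_4+p^{(3)}(n)$ of the $x\le y$ guard. Adding these produces $3p^{(3)}(n)+2p^{(2)}(n)+N_3+N_4$, and a final substitution of $N_3+N_4$ simplifies it to $p(n)+4p^{(3)}(n)$, as claimed. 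After the iteration counts are expressed through $p(n)$, $p^{(2)}(n)$, and $p^{(3)}(n)$, both closing simplifications are purely arithmetic, exactly as in the proof of Theorem \ref{th36}.
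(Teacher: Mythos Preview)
Your proposal is correct and follows the same decomposition as the paper: identical iteration counts for the five loops, identical assignment and guard tallies per loop, and the same final arithmetic. The only substantive difference is how you obtain $N_3+N_4$. The paper reads this count off the earlier tree analysis as $p(n-2)-\bigl(p^{(2)}(n)-p^{(3)}(n)\bigr)$, using that the left-child leaves number $p(n-2)$ (established in the discussion of Algorithm~\ref{alg5}) and that $p^{(2)}(n)-p^{(3)}(n)$ of them are visited inside the $2x\le y$ loop; it then invokes Corollary~\ref{th6F} at the end to rewrite $p(n-2)$ in terms of $p(n)$ and $p^{(2)}(n)$. Your visit-counting identity $2\bigl(p^{(2)}(n)-p^{(3)}(n)\bigr)+N_3+N_4+p^{(3)}(n)=p(n)$ yields the same value directly in the form $p(n)-2p^{(2)}(n)+p^{(3)}(n)$, which is a cleaner and more self-contained route that avoids the detour through $p(n-2)$ and the closing appeal to Corollaries~\ref{th6F} and~\ref{th6G}.
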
  

\begin{proof} 
	The total number of iterations of the internal \textbf{while} loop from the lines \ref{alg101}-\ref{alg102} is $p^{(3)}(n)$, and the \textbf{while} loop contains $6$ assignment statements. 
	The total number of iterations of the internal \textbf{while} loop from the lines \ref{alg103}-\ref{alg104} is $p^{(3)}(n)-1$, and the \textbf{while} loop contains $3$ assignment statements. 
	The total number of iterations of the internal \textbf{while} loop from the lines \ref{alg105}-\ref{alg106} is $p^{(2)}(n)-p^{(3)}(n)$, and the \textbf{while} loop contains $8$ assignment statements. 
	The total number of iterations of the internal \textbf{while} loops from the lines \ref{alg107}-\ref{alg108} and \ref{alg109}-\ref{alg1010} are $p(n-2)-\left(p^{(2)}(n)-p^{(3)}(n)\right)$, and each of them contains $4$ assignment statements. 
	Taking into account the first 3 assignment statements from the algorithm, we get the relation with which we determine the number of executions of assignment statements: $5p^{(3)}(n)+4p^{(2)}(n)+4p(n-2)$.
	The boolean expression from the line \ref{alg101} is evaluated $p^{(3)}(n)+1$ times, the boolean expression from the line \ref{alg103} is evaluated $p^{(3)}(n)+p^{(3)}(n)-1$ times, the boolean expression from the line \ref{alg105} is evaluated $p^{(2)}(n)$ times, and the boolean expressions from the lines \ref{alg107} and \ref{alg109} are evaluated $p^{(3)}(n)+p(n-2)$.
	It results that the boolean expressions from the algorithm are evaluated $4p^{(3)}(n)+p^{(2)}(n)+p(n-2)$ times.
	By Corollary \ref{th6F} and Corollary \ref{th6G} the proof is finished.
\end{proof}

\begin{theorem}\label{th38}
	Algorithm \ref{alg10} is more efficient than Algorithm \ref{alg9}.     
\end{theorem}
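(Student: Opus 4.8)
The plan is to reduce the statement to a single inequality between restricted partition numbers and then to establish that inequality. By Theorem~\ref{th36}, Algorithm~\ref{alg9} performs $4p(n)+4p^{(2)}(n)$ assignments and evaluates $p(n)+3p^{(2)}(n)$ boolean expressions, while by Theorem~\ref{th37} Algorithm~\ref{alg10} performs $4p(n)+5p^{(3)}(n)$ assignments and evaluates $p(n)+4p^{(3)}(n)$ boolean expressions. Reading ``more efficient'' as ``executes no more assignments and evaluates no more boolean expressions, with strictly fewer of at least one kind,'' I would first note that the $4p(n)$ and $p(n)$ terms cancel, so the comparison reduces to the two inequalities $5p^{(3)}(n)\le 4p^{(2)}(n)$ (assignments) and $4p^{(3)}(n)\le 3p^{(2)}(n)$ (boolean expressions).

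Next I would eliminate $p^{(3)}$ in favour of $p^{(2)}$. Corollary~\ref{th6E} with $t=3$ gives $p^{(3)}(n)=p^{(2)}(n)-p^{(2)}(n-3)$, so the assignment surplus of Algorithm~\ref{alg9} over Algorithm~\ref{alg10} equals $4p^{(2)}(n)-5p^{(3)}(n)=5p^{(2)}(n-3)-p^{(2)}(n)$, and the boolean surplus equals $3p^{(2)}(n)-4p^{(3)}(n)=4p^{(2)}(n-3)-p^{(2)}(n)$. Since $p^{(2)}(n-3)>0$, the boolean condition is the binding one: once $p^{(2)}(n)\le 4p^{(2)}(n-3)$ is known, $p^{(2)}(n)\le 5p^{(2)}(n-3)$ follows automatically and the assignment surplus is then at least $p^{(2)}(n-3)>0$. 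Thus everything rests on the single growth estimate
\begin{equation}\label{eq:crux}
p^{(2)}(n)\le 4\,p^{(2)}(n-3)\ .
\end{equation}

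I expect \eqref{eq:crux} to be the main obstacle, precisely because it is tight: evaluating $p^{(2)}$ shows that equality holds at $n=5$, so the constant $4$ cannot be improved and no purely asymptotic estimate can suffice on its own. Two routes seem promising. The analytic route uses Corollary~\ref{th6F} to rewrite \eqref{eq:crux} as $p(n)-p(n-2)\le 4\bigl(p(n-3)-p(n-5)\bigr)$, i.e.\ as the assertion that the coefficients of $(1-q^2)(1-4q^3)/(q;q)_\infty$ are eventually non-positive; the factorization parallels the one exploited in Theorem~\ref{th6H}, but the coefficient $4$ breaks the clean telescoping into terms $q^{3n}/(q;q)_n$, so this path needs an additional idea. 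The combinatorial alternative is to prove the one-step ratio bound $p^{(2)}(n)\le\tfrac32\,p^{(2)}(n-1)$ for $n\ge 4$; iterating it three times yields $p^{(2)}(n)\le(\tfrac32)^3p^{(2)}(n-3)=\tfrac{27}{8}\,p^{(2)}(n-3)<4\,p^{(2)}(n-3)$ for every $n\ge 6$, after which the finitely many cases $n\le 5$ are settled by direct enumeration, the equality at $n=5$ being recorded explicitly.

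Finally I would assemble the conclusion. For $n\ge 6$ both surpluses $5p^{(2)}(n-3)-p^{(2)}(n)$ and $4p^{(2)}(n-3)-p^{(2)}(n)$ are strictly positive, so Algorithm~\ref{alg10} executes strictly fewer assignments and evaluates strictly fewer boolean expressions than Algorithm~\ref{alg9}; at $n=5$ the boolean counts coincide while the assignment count of Algorithm~\ref{alg10} is strictly smaller, so Algorithm~\ref{alg10} remains the more efficient of the two. The genuinely delicate point, rather than the routine bookkeeping, is the proof of the ratio bound $p^{(2)}(n)\le\tfrac32\,p^{(2)}(n-1)$ (equivalently, the non-positivity of the tail coefficients in the analytic route), and that is where I would concentrate the effort.
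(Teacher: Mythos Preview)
Your reduction is exactly the one the paper makes: both counts compare via the single inequality $4p^{(3)}(n)\le 3p^{(2)}(n)$, which by Corollary~\ref{th6E} becomes $p^{(2)}(n)\le 4\,p^{(2)}(n-3)$. The difference lies entirely in how this last inequality is established, and here your proposal has a genuine gap.

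You propose to prove the one-step ratio bound $p^{(2)}(n)\le\tfrac32\,p^{(2)}(n-1)$ and iterate, but you do not prove it; you flag it as ``the genuinely delicate point'' and leave it open. That bound is strictly stronger than anything available in the paper (in particular it does not follow from Theorem~\ref{th6H} or Corollary~\ref{th6I}), and the analytic route you sketch also stalls, as you yourself note. So as it stands the argument is incomplete.

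What you have overlooked is that Corollary~\ref{th6I}, already proved, supplies a weaker one-step inequality that is nevertheless sufficient. Rewriting $p^{(3)}(n)\le p^{(2)}(n-1)$ via Corollary~\ref{th6E} gives
\[
p^{(2)}(n)\ \le\ p^{(2)}(n-1)+p^{(2)}(n-3),
\]
and iterating this twice more yields $p^{(2)}(n)\le 2p^{(2)}(n-3)+p^{(2)}(n-4)+p^{(2)}(n-5)$. Monotonicity of $p^{(2)}$ then gives $p^{(2)}(n)\le 4\,p^{(2)}(n-3)$ directly. This is the paper's route, and it closes the proof with tools already on the table; your stronger $\tfrac32$-ratio bound is unnecessary.
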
  

\begin{proof} According to Theorem \ref{th36} and Theorem \ref{th37}, it is sufficient to show that 
	$$4p^{(3)}(n)\le 3p^{(2)}(n)\ .$$ 
	By Corollary \ref{th6E}, we can rewrite this inequality as
	$p^{(2)}(n)\le 4p^{(2)}(n-3)$.
	Using Corollary \ref{th6E} and Corollary \ref{th6I}, we obtain
	$$p^{(2)}(n)\le2p^{(2)}(n-3)+p^{(2)}(n-4)+p^{(2)}(n-5)\ .$$
	Taking into account $p^{(2)}(n-1)\le p^{(2)}(n)$, the theorem is proved.
\end{proof}

\section{Experimental results and conclusions}

We denote by $r_1(n)$ the ratio of the number of assignment statements executed by Algorithm \ref{alg10} and the number of assignment statements executed by Algorithm \ref{alg9},
$$r_1(n) = \frac{p(n)+1.25\cdot p^{(3)}(n)}{p(n)+p^{(2)}(n)}\ ,$$
and by $r_2(n)$ the ratio of the number of boolean expressions evaluated by Algorithm \ref{alg10} and the number of boolean expressions evaluated by Algorithm \ref{alg9},
$$r_2(n) = \frac{p(n)+4p^{(3)}(n)}{p(n)+3p^{(2)}(n)}\ .$$

We can see the evolution of the ratios $r_1(n)$ and $r_2(n)$, for $1<n\le 1500$. The graph is realized in Maple \cite{Gar01} and allows to note that the best performances of Algorithm \ref{alg10} compared to Algorithm \ref{alg9} appears for $50\le n\le 150$.

\begin{figure}[h]
	\caption{Evolution of performances of Algorithm \ref{alg10} compared to Algorithm \ref{alg9}}
	\label{fig10}
	\begin{center}
		\leavevmode
		\includegraphics[scale=0.9]{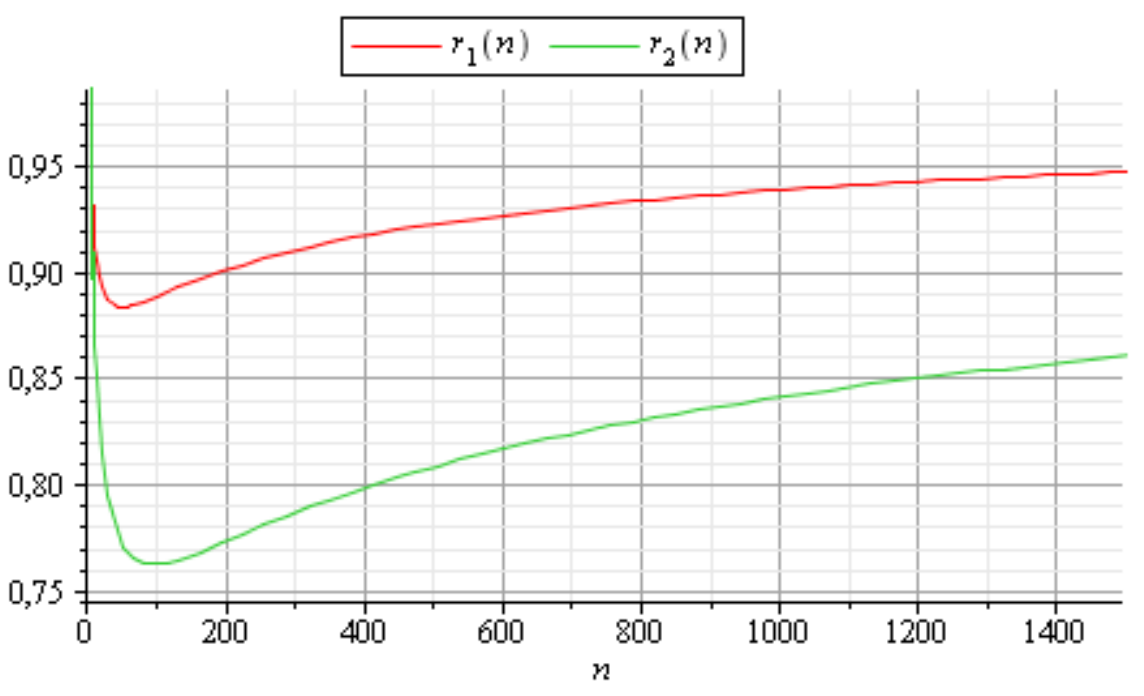}
	\end{center}
\end{figure}

Now let us measure CPU time for few values of $n$. 
To do this we encode the algorithms in C++ and the programs obtained with Visual C++ 2010 Express Edition will be run on three computers in similar conditions. 
The processor of the first computer is Intel Pentium Dual T3200 2.00 GHz, the processor of the second computer is Intel Celeron M 520 1.60GHz, and the the processor of third one is Intel Atom N270 1.60GHZ. 

CPU time is measured when the program runs without printing out ascending compositions. We denote by $t_1(n)$ the average time for Algorithm \ref{alg10} obtained after ten measurements, by $t_2(n)$ the average time for Algorithm \ref{alg9} obtained after ten measurements, and by $r(n)$ the ratio of $t_1(n)$ and $t_2(n)$, 
$$r(n)=\frac{t_1(n)}{t_2(n)}\ .$$ 
In Table \ref{talg910} we present the results obtained after the measurements made and also the ratios $r_1(n)$ and $r_2(n)$.

\begin{table}[H]
	\caption{Experimental results on few Intel CPU}
	\label{talg910}
	\centering
	\begin{tabular}{c|c|c|c|c|c|}
		\cline{2-6}
		& \multicolumn{2}{|c|}{Theoretical} & \multicolumn{3}{|c|}{Experimental $r(n)$} \\
		\cline{1-6}
		\multicolumn{1}{|c|}{$n$} & $r_1(n)$ & $r_2(n)$ & Pentium Dual& Intel Celeron & Intel Atom \\
		\hline
		\multicolumn{1}{|c|}{20} & 0.89556 & 0.82113 & 0.82257 & 0.78651 & 0.93977 \\
		\multicolumn{1}{|c|}{30} & 0.88738 & 0.79381 & 0.74365 & 0.75669 & 0.92693 \\
		\multicolumn{1}{|c|}{40} & 0.88467 & 0.77992 & 0.74621 & 0.75323 & 0.93712 \\
		\multicolumn{1}{|c|}{50} & 0.88403 & 0.77197 & 0.74905 & 0.76713 & 0.83457 \\
		\multicolumn{1}{|c|}{60} & 0.88438 & 0.76731 & 0.76146 & 0.76397 & 0.89760  \\
		\multicolumn{1}{|c|}{70} & 0.88525 & 0.76465 & 0.76036 & 0.76893 & 0.89812  \\
		\multicolumn{1}{|c|}{80} & 0.88639 & 0.76326 & 0.75739 & 0.77172 & 0.90994  \\
		\multicolumn{1}{|c|}{90} & 0.88766 & 0.76271 & 0.75549 & 0.77064 & 0.88617  \\
		\multicolumn{1}{|c|}{100} & 0.88901 & 0.76274 & 0.75464 & 0.76323 & 0.89064  \\
		\multicolumn{1}{|c|}{110} & 0.89037 & 0.76319 & 0.75352 & 0.77381 & 0.88936 \\
		\multicolumn{1}{|c|}{120} & 0.89174 & 0.76392 & 0.75358 & 0.77326 & 0.89617 \\
		\multicolumn{1}{|c|}{130} & 0.89308 & 0.76485 & 0.75445 & 0.77012 & 0.89004 \\
		\hline
	\end{tabular}
\end{table}

Analyzing the data presented in Table \ref{talg910}, we realize that the ratio $r_2(n)$ is a good approximation of the ratio $r(n)$, obtained experimentally on computers with Intel Pentium Dual or Intel Celeron processors.
We note that the ratio $r(n)$ obtained on the computer with Intel Atom processors is well approximated by the ratio $r_1(n)$.

Algorithm \ref{alg10} is the fastest algorithm for generating ascending compositions in lexicographic order in standard representation and can be considered an accelerated version of the algorithm \scshape AccelAsc \normalfont that Kelleher \cite{Kel06,Kel09} presented. 

\paragraph{Acknowledgements.} The author is greatly indebted to Professor George E. Andrews from the Department of Mathematics
	of The Pennsylvania State University who offered us the proof for Theorem 4 in a private context.
	The author would like to express his gratitude for the careful reading and helpful remarks to Oana Merca, which have resulted in what is hopefully a clearer paper. Moreover, the author wishes to thank the referee for his/her useful suggestions.

\end{document}